\titleformat{\section}[block]{\normalfont\large\bfseries}
{\thesection.}{0.8em}{}
\titleformat{\subsection}[runin]{\normalfont\bfseries}
{\thesubsection.}{0.5em}{}[.]
\titleformat{\subsubsection}[runin]{\normalfont\normalsize\itshape}
{\thesubsubsection.}{0.5em}{}[.]
\newcounter{first}
{\end{list}}
\definecolor{dkgreen}{rgb}{0.1,0.4,0.0}
\definecolor{dkblue}{rgb}{0,0.1,0.8}
\definecolor{dkred}{rgb}{1,0,0}
\def\define{\ensuremath{\overset{\operatorname{\scriptscriptstyle def}}=}}
\theoremstyle{plain}
\newtheorem{theorem}[subsection]{Theorem}
\theoremstyle{definition}
\newtheorem{remark}[subsection]{Remark}
\newtheorem{definition}[subsection]{Definition}
\newtheorem*{conjecture*}{Conjecture}
\newcommand{\doi}[1]
{\texttt{\href{http://dx.doi.org/#1}{\nolinkurl{doi:#1}}}}
\newcommand{\web}[1]
{\texttt{\href{#1}{\nolinkurl{#1}}}}
\title{Efficiency Axioms for simplicial complexes}
\author{Ivan Martino}
\date{\today}
\begin{document}

\def\rank#1{\ensuremath{\operatorname{rk} #1}}
\def\rk{\ensuremath{\operatorname{rk}}}
\def\cork{\ensuremath{\operatorname{cork}}}

\def\face{\ensuremath{\operatorname{\mathcal F} (\Delta)}}

\def\FacetsD{\ensuremath{\operatorname{Fs} \Delta}}	
\def\Facets#1{\ensuremath{\operatorname{Fs} #1}}
\def\Facet#1#2{\ensuremath{\operatorname{Fs}_{#2} #1}}

\def\closure#1#2{\ensuremath{\operatorname{cl}_{#1} (#2)}}
\def\closureDelta#1{\ensuremath{\operatorname{cl}_{\Delta} (#1)}}

\def\Star#1#2{\ensuremath{\operatorname{Star}_{#2} #1 }}
\def\StarNoD#1{\ensuremath{\operatorname{Star} #1 }}
\def\StarD#1{\ensuremath{\operatorname{Star}_{\Delta} #1 }}

\def\Link#1#2{\ensuremath{\operatorname{Link}_{#2} #1 }}
\def\LinkNoD#1{\ensuremath{\operatorname{Link} #1 }}
\def\LinkD#1{\ensuremath{\operatorname{Link}_{\Delta} #1 }}

\def\ffD{\ensuremath{\operatorname{\textbf{f}}(\Delta)}}
\def\ff#1{\ensuremath{\operatorname{\textbf{f}}(#1)}}
\def\ffi#1#2{\ensuremath{\operatorname{f}_{#1}(#2)}}

\def\charFun{\ensuremath{\mathbb{R}^{\Delta} }}

\def\vtot{\ensuremath{\operatorname{v}_{\Delta}}}
\def\vtotof#1{\ensuremath{\operatorname{v}_{#1}}}

\def\Coalition#1#2{\ensuremath{\operatorname{Coalition}_#2 #1 }}

\begin{abstract}
We study the notion of efficiency for cooperative games on simplicial complexes. 
In such games, the grand coalition $[n]$ may be forbidden, and, thus, it is a non-trivial problem to study the total number of payoff $\vtot$ of a cooperative game $(\Delta, v)$.

We address this question in the more general setting, by characterizing the individual values that satisfy the general efficient requirement, that is $\vtot^{gen}=\sum_{T\in \Delta} a_T v(T)$ for a generic assignment of real coefficients $a_T$.
The traditional and the probabilistic efficiency are treated as a special case of this general efficiency.

Finally, we introduce a new notion of efficiency arising from the combinatorial and topological property of the simplicial complex $\Delta$. The efficiency in this scenario is called \emph{simplicial} and we characterize the individual values fulfilling this constraint.
\end{abstract}

\maketitle

In the traditional $n$-person game the characteristic function  $v:2^{[n]}\rightarrow \mathbb{R}$ determines the \emph{worth} of each coalition, where $[n]\define \{1,\dots, n\}$.
The individual value $\phi_i$ associated to such cooperative game $(n, v)$ measures the contribution of the player $i$ in the game. We collect such values all together in the group value $\phi=(\phi_1, \phi_2, \dots, \phi_n)$. The assessment is optimistic (w. r. to $v$) if the sum of the payoff vector $\sum_i \phi_i(v)$ is greater than the $v([n])$, the worth of the grand coalition. If the contrary happens, then $\phi$ is pessimistic (w. r. to $v$).

Consider the vector space of cooperative games $\mathbb{R}^{2^n-1}$, that is the set of all characteristic function under the constrain $v(\emptyset)=0$. Provided a game $(n, v)$ in $\mathbb{R}^{2^n-1}$, we consider the scaled game $(n, c\cdot v)$ given by the characteristic function $(c\cdot v)(T)=c v(T)$ where $c$ is a real constant. 
It seems natural to assume that in this case the individual value is also scaled by $c$, $\phi_i(c\cdot v)=c\phi_i(v)$. 
Therefore consider a cone of games $\mathfrak{I}$ in $\mathbb{R}^{2^n-1}$.

We are interesting in group values that are nor optimistic or pessimistic, despite this might be an artificial condition.
This consideration leads to the constraint known as called \emph{Efficiency Axiom}:

\vspace{0.2cm}
\hspace{0.2cm} \textbf{Efficiency Axiom.} 

\hspace{0.2cm} For every cooperative game $(2^{[n]}, v)$ in $\mathfrak{I}$, one has $\sum_{i=1}^n \phi_i(v)=v([n])$.
\vspace{0.2cm}

If certain coalitions are forbidden (take for instance the grand coalition), it is necessary to study what could take the place of the the total number of payoff $\vtot$, that in the traditional case reduces to $\vtot=v([n])$.
In this work we focus on the specific instance of this problem for cooperative game on a simplicial complex \cite{Martino-cooperative, Martino-Probabilistic-value}.
We are going to shortly present our new results after introducing this new generalization for cooperative games.

\subsection*{Cooperative game on simplicial complex}
Inspired by several articles  \cite{Shapley-matroids-static, Shapley-matroids-dynamic, MR3886659, MR2847360, MR2825616, MR1436577, MR1707975}, the author has defined cooperative games on simplicial complexes \cite{Martino-cooperative}.
In fact, a simplicial complex is a family $\Delta$ of subsets of $[n]$ under the constrain that every subset of $X\in \Delta$, also belongs to the family $\Delta$. 
A cooperative game on $\Delta$ is defined by a characteristic function $v$:
\[
	v: \Delta \rightarrow \mathbb{R}
\]
with $v(\emptyset)=0$.
In such game, a player $i$ in $[n]$ may join a coalition $T$ only if $T\cup i\in \Delta$. In such case, the coalition is \emph{feasible}.
If every subset of $[n]$ is feasible, then $\Delta=2^{[n]}$ and we recover the literature case.

\noindent
As in the classical case, the individual value function $\phi_i(v)$ for the player $i$ determines the worth of the participation of $i$ in a \emph{feasible} coalition during the cooperative game $(\Delta, v)$.
As before, we may consider a cone $\mathfrak{I}$ of cooperative games defined in \charFun, the vector space of all cooperative games on $\Delta$.

\subsection*{Quasi-probabilistic values}

The issue of studying $\vtot$ already arises in the work of Bilbao, Driessen, Jim\'{e}nez Losada and Lebr\'{o}n \cite{Shapley-matroids-static}, where they introduce the notion of \emph{probabilistic} efficiency for games over a matroid. One of the perks of matroids is that they are pure simplicial complex; in other words the maximal facets of a matroid have the same cardinality, see for instance \cite{Stanley2012b, Stanley1996a, Stanley1991a, MR782306, Oxley, Martino2018, Borzi-Martino-D-matroids}.
Here we present the natural generalization to simplicial complexes that already appears in Section 6 of \cite{Martino-cooperative}:

\vspace{0.2cm}
\hspace{0.2cm} \textbf{Probabilistic Efficiency Axiom.} 

\hspace{0.2cm} For every cooperative game $(\Delta, v)$ in $\mathfrak{I}$, $\sum_{i=1}^n \phi_i(v)=\sum_{F\in \FacetsD}c_F v(F)$,

\noindent
\hspace{0.6cm} 
 with $\sum_{F\in \FacetsD}c_F =1$ and $c_F\geq 0$ for every facet $F$ of $\Delta$.
\vspace{0.2cm}

\noindent
In the above equation, $\FacetsD$ is the set of facets of $\Delta$, that are maximal elements by inclusion.

It is worth to recall that the Efficient axiom, and respectively the Probabilistic efficient are crucial to characterizes the Shapley values \cite{Shapley-a-value, Shapley-core-convex, Weber-robabilistic-values-for-games}, and respectively the quasi-probabilistic values that can be written as sum of Shapley values \cite{Shapley-matroids-static, Martino-cooperative}.

\subsection*{Our Approach}

We adopt a differ point of view originated by the next consideration. 
The first axiom in the theory of probabilistic values is the \emph{Linearity Axiom} for individual values,
see Section 3 of \cite{Weber-robabilistic-values-for-games}.
We are also going to assume that the total number of payoff $\vtot$ is a linear function:
\[
	\vtot: \mathbb{R}^\Delta \rightarrow \mathbb{R}.
\]
Therefore, the total number of payoff can be written as 
\begin{equation*}
	\vtot=\sum_{T\in \Delta} a_T v(T).
\end{equation*}
(Note that we allow $T=\emptyset$, because $v(\emptyset)=0$.)
The choice of the coefficients $a_T$ describes a diverse efficiency scenario: for instance, the \emph{efficiency axiom} for $(2^{[n]}, v)$ is given by setting $a_T=0$ for every $T\neq [n]$ and $a_{[n]}=1$.
Similarly, the \emph{probabilistic efficiency} of Bilbao, Driessen, Jim\'{e}nez Losada and Lebr\'{o}n \cite{Shapley-matroids-static} is obtained from the choice of $a_F=c_F$ for every facet $F$ of $\Delta$ and $a_T=0$, otherwise. 
Furthermore, a more flexible efficiency condition is needed in Theorem E of \cite{Martino-Probabilistic-value} to properly encode Shapley values on simplicial complexes.

Our first result characterize individual values that satisfies the \emph{general} efficient condition.

\setcounter{section}{2}
\begin{theorem}
Let $\Delta$ be a simplicial complex and let $\mathfrak{I}$ be a cone of cooperative games $v$ defined on $\Delta$ containing the carrier games $\mathcal{C}$ and $\hat{\mathcal{C}}$.

Let $\phi$ be a group value on $\mathcal{I}$ such that for each $i\in [n]$ and assume that for each $v\in\mathfrak{I}$, we can write:
\[
		\phi_i(v)=\sum_{T\in \Link{i}{\Delta}}p_T^i \left(v(T\cup i) - v(T)\right).
\]

\noindent
The group value $\phi$ satisfies the simplicial efficiency axiom if and only if for all non-facet $T$ in $\Delta$
\begin{equation*}
\sum_{i\in T}p^i_{T\setminus i}-\sum_{j, T\in \Link{j}{\Delta}}p_T^j=a_T,
\end{equation*}
and for every facet $F$ of $\Delta$
\begin{equation*}
	\sum_{i\in F}p^i_{F\setminus i}=a_F.
\end{equation*}
\end{theorem}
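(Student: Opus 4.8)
The plan is to regard both sides of the simplicial efficiency equation $\sum_{i=1}^{n}\phi_i(v)=\vtot$ as linear functionals of $v$ on the space $\charFun$ and to compare them coefficient by coefficient. Expanding the left-hand side will produce an expression of the form $\sum_{i=1}^{n}\phi_i(v)=\sum_{S\in\Delta}b_S\,v(S)$ for suitable coefficients $b_S$ built from the $p_T^i$, while the right-hand side is $\sum_{S\in\Delta}a_S\,v(S)$ by definition of $\vtot$. The hypothesis that $\mathfrak{I}$ is a cone containing the carrier games $\mathcal{C}$ and $\hat{\mathcal{C}}$ enters exactly here: since these games span $\charFun$, two linear functionals that agree on all of $\mathfrak{I}$ must agree on every coordinate, so the axiom holds for every $v\in\mathfrak{I}$ if and only if $b_S=a_S$ for each $S\in\Delta$. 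Both directions of the equivalence then follow from this single identification of coefficients.

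The computational heart is the identification of $b_S$. First I would substitute $\phi_i(v)=\sum_{T\in\Link{i}{\Delta}}p_T^i\bigl(v(T\cup i)-v(T)\bigr)$ and sum over $i\in[n]$, splitting each summand into a positive contribution $p_T^i\,v(T\cup i)$ and a negative contribution $-p_T^i\,v(T)$. To read off the coefficient of a fixed $v(S)$ I collect every occurrence of $S$ among these two families. The positive terms produce $v(S)$ precisely when $T\cup i=S$, that is when $i\in S$ and $T=S\setminus i$; such a pair is always admissible, since $S\in\Delta$ forces $S\setminus i\in\Link{i}{\Delta}$, and this yields the contribution $\sum_{i\in S}p^i_{S\setminus i}$. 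The negative terms produce $v(S)$ precisely when $T=S$, which requires that the summing player $j$ satisfy $S\in\Link{j}{\Delta}$, yielding $-\sum_{j,\,S\in\Link{j}{\Delta}}p_S^j$. Hence $b_S=\sum_{i\in S}p^i_{S\setminus i}-\sum_{j,\,S\in\Link{j}{\Delta}}p_S^j$.

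The last step is the dichotomy between facets and non-facets, which is the source of the two displayed equations. If $F$ is a facet, then by maximality no player $j\notin F$ admits $F\cup j\in\Delta$, so $F$ belongs to no link $\Link{j}{\Delta}$ and the negative sum is empty; therefore $b_F=\sum_{i\in F}p^i_{F\setminus i}$, and $b_F=a_F$ is exactly the facet condition. If $T$ is a non-facet, some extension survives and both sums persist, so $b_T=a_T$ recovers the non-facet condition verbatim. Imposing $b_S=a_S$ for all $S\in\Delta$ thus coincides with the two stated formulas, and the equivalence is complete.

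I expect the main obstacle to be the bookkeeping in the expansion rather than any deep idea: one must confirm that the index set $\{(i,T):i\in S,\ T=S\setminus i\}$ governing the positive part is precisely in bijection with the vertices of $S$, and that the positive and negative contributions to a given $v(S)$ never overlap, since the former arise from $T\cup i=S$ and the latter from $T=S$. The only other delicate point is the appeal to the carrier games: one must verify that $\mathcal{C}$ and $\hat{\mathcal{C}}$ genuinely span $\charFun$, as it is this spanning property — and not merely membership of a few test games in $\mathfrak{I}$ — that legitimizes the passage from equality of functionals on $\mathfrak{I}$ to equality of all coefficients in the \emph{only if} direction.
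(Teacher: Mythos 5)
Your proposal is correct and follows essentially the same route as the paper: expand $\sum_i\phi_i(v)$, collect the coefficient of each $v(S)$ to get $b_S=\sum_{i\in S}p^i_{S\setminus i}-\sum_{j,\,S\in\Link{j}{\Delta}}p_S^j$, observe that the negative sum vanishes exactly when $S$ is a facet, and use the carrier games to pass from equality of functionals on $\mathfrak{I}$ to equality of coefficients. The only cosmetic difference is that the paper extracts $b_T=a_T$ concretely via the telescoping combination $v_T-\hat{v}_T$ (the indicator of $T$), whereas you invoke the spanning of $\charFun$ by the carrier games abstractly; both rest on the same triangularity of the unanimity games.
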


In Theorem \ref{thm:t.e.a.}, we specialize the previous result in the traditional setting and we reproduce 
Theorem 11 of \cite{Weber-robabilistic-values-for-games} 

\noindent
Similarly, applying Theorem \ref{thm:g.e.a.}, we characterize the individual values that satisfy the probabilistic efficiency, see Theorem \ref{thm:probabilistic-characterization}.

Moreover in Section \ref{sec:simplicial-efficiency}, we introduce and study a new efficiency conditions: the \emph{simplicial} efficiency.

\subsection*{The simplicial efficiency}
In this scenario is built on the following point of view.
If the grand coalition $[n]$ is forbidden, the largest possible coalitions are precisely the elements of \FacetsD.
However, the facets may intersect.  
Thus, in Section \ref{sec:simplicial-efficiency}, with an inclusion-exclusion argument we study the following number of total payoff:
	\[
		\vtot^{simpl}=\sum_{l=1}^{k}\sum_{\{F_{i_j}\}\in{\FacetsD \choose l}} (-1)^{l+1}v(F_{i_1}\cap \dots \cap F_{i_l}).
	\]
We classify the values fulfilling this requirement in Theorem \ref{thm:s.e.a.}. In the specific case of matroids, we show that $\vtot^{simpl}$ only depends by the dimension-zero and codimension-one skeleton of $\Delta$, see Theorem \ref{thm-reduction-to-matroids}.
Moreover in Remarks \ref{rmk:connection-2-matroids-probabilistic} and \ref{rmk:connection-2-matroids-approximation} we argue that the probabilistic efficiency is a first-hand approximation of the simplicial efficiency.

%
%

%
%
%

\subsection*{Acknowledgments} 
	The author is currently supported by the Knut and Alice Wallenberg Foundation and by the Royal Swedish Academy of Science.

\setcounter{section}{0}

\section{Preliminaries}\label{sec:preliminaries}

In this manuscript $n$ is a positive integer and we denote by $[n]\define\{1,\dots, n\}$. 
A (finite) simplicial complex $\Delta$ over $n$ verticies is a family of subsets of $[n]$ with the simplicial condition:
\[
	T\in \Delta, S\subseteq T\Rightarrow S\in \Delta.
\]
The set of elements of the simplicial complex is denoted by $\operatorname{Set}(\Delta)$.
A facet of $\Delta$ is a set $F$ in $\Delta$ that is maximal by inclusion.
The set of facets is $\FacetsD$. If every facets have the same cardinality then the simplicial complex is said to be \emph{pure}.

In this paper, we refer to results of \cite{Martino-cooperative} and \cite{Martino-Probabilistic-value} that use the notions of star and link of a vertex $i$ in $\Delta$. We recall those for completeness, even if they will not be central in our study.

If $S$ is an element in $\Delta$, then $\bar{S}\define 2^{S}$ is the $(|S|-1)$-dimensional simplex defined on the verticies of $S$.

\begin{definition}\label{def:star}
The star of an element $S$ in $\Delta$ is the simplicial complex defined to be the collection of all subset in $\bar{T}$ with $T$ being in $\Delta$ ans containing $S$,
\begin{equation*}
	\Star{S}{\Delta}=\{A:\, A\in \bar{T},\, T\in \Delta,\, S\subseteq T\}.
\end{equation*}
\end{definition}

\noindent
We highlight when $S=\{i\}$ is a vertex, then $\Star{i}{\Delta}$ is the set of simplex $\bar{T}$ containing $i$, that is
\[
	\Star{i}{\Delta}=\{A:\, A \subseteq T,\, i\in T\in \Delta\}.
\]

\begin{definition}\label{def:link}
The link of of an element $S$ in a simplicial complex $\Delta$ is made by the subsets $A$ of $T\in \Delta$, such that $T$ is disjoint by $S$ and can be completed by $S$, $S\cup T$, to an element in $\Delta$:

\begin{equation*}
	\Link{S}{\Delta}=\{A:\, A\in \bar{T} \mbox{ with } T\in \Delta \mbox{ such that } S\cap T = \emptyset, S\cup T \in \Delta\}.
\end{equation*}
\end{definition}

\noindent
The case when $S$ is the singleton $\{i\}$ will be extremely relevant in our work: $\Link{i}{\Delta}$ is the set of simplex $T$ in $\Delta$ with $i\notin T$ such that $T\cup i\in \Delta$:
\[
	\Link{i}{\Delta}=\{T\in \Delta: i\notin T \mbox{ and } T\cup i \in \Delta\}.
\]

\subsection{Matroids}
Since they were introduced by Whitney \cite{Whi35} in 1935, matroids are at the crossroads of Algebra, Combinatorics, Geometry, and Topology.  
New variations have appeared in literature encoding different type of independence \cite{Moci2012, Martino2018,  Borzi-Martino-D-matroids}, but here we recall the traditional one and we refer for a detailed description to \cite{Stanley2012b, Stanley1996a, Stanley1991a, MR782306, Oxley}.
	
	A \emph{matroid} on the ground set $[n]$ is a collection $\mathcal{I}$ of subsets of $[n]$ (called \emph{independent sets}), such that
\textbf{(I1)} $\emptyset \in \mathcal{I}$, \textbf{(I2)} $A \subseteq B \in \mathcal{I} \Rightarrow A \in \mathcal{I}$, and \textbf{(I3)} $A,B \in \mathcal{I}$, $|A| < |B|$ $\Rightarrow$ $\exists b \in B \setminus A: A \cup \{b\} \in \mathcal{I}$.

\noindent
The first two axioms make $\mathcal{I}$ into a (non-empty) simplicial complex. Axiom (I3) is sometimes referred as \emph{independent set exchange property} (or \emph{independence augmentation axiom.}). Let $\mathcal{I}$ be a matroid on the ground set $[n]$, and let $A \subseteq [n]$. All maximal independent subsets of $A$ have the same cardinality, called the \emph{rank} $\rk(A)$ of $A$, whereas the \emph{corank} of $A$ is $\cork(A) = \rk([n]) - \rk(A)$. Hence a matroid is a pure simplicial complex, see for instance Figure \ref{fig:figure-matroid-yes}.
Not every pure simplicial complex is a matroid; in fact Figure \ref{fig:figure-matroid-no} shows a rank three simplicial complex that is not a matroid. Indeed, the independent set $\{5\}$, cannot be extended to a base by any element in the independent set $\{2,3\}$.
Simplicial complexes that are not matroids are extremely important in Mathematics; few example can be found in \cite{Martino2015e, MR3503390, Martino-Greco-pinched}.

%

\begin{figure}
\centering
\begin{subfigure}[b]{5cm}
\begin{tikzpicture}[scale=0.70]
   	\coordinate (F) at ( -3.0,  1.0);
  	\coordinate (S) at (-2.5, -2.0); 
	\coordinate (FB) at ( 0.0, -0.0); 
	\coordinate (TV) at (3.0, 1.0); 
	\coordinate (E) at (2.5, -2.0);
	 
   \node [left] at ( -3.0,  1.0) {1}; 
   \node [left] at (-2.5, -2.0) {2}; 
	\node [above] at ( 0.0, -0.0) {3}; 
   \node [right] at (3.0, 1.0) {4}; 
   \node [right] at (2.5, -2.0) {5};

   \filldraw [draw=black, fill=red!20, line width=1.5pt] (FB)--(TV)--(E) -- (FB);
   
   \filldraw [draw=black, fill=green!20, line width=1.5pt] (FB)--(S)--(E) -- (FB);

\filldraw [draw=black, fill=blue!20, line width=1.5pt] (FB)--(F)--(S) -- (FB); 
   
   
\end{tikzpicture}
        \subcaption{This simplicial complex is a matroid.}
        \label{fig:figure-matroid-yes}
    \end{subfigure}
~\hspace{0.5cm}
	\begin{subfigure}[b]{5cm}
\begin{tikzpicture}[scale=0.75]
   	\coordinate (F) at ( -3.0,  1.0);
  	\coordinate (S) at (-2.5, -2.0); 
	\coordinate (FB) at ( 0.0, -0.0); 
	\coordinate (TV) at (3.0, 1.0); 
	\coordinate (E) at (2.5, -2.0);
	 
   \node [left] at ( -3.0,  1.0) {1}; 
   \node [left] at (-2.5, -2.0) {2}; 
	\node [above] at ( 0.0, -0.0) {3}; 
   \node [right] at (3.0, 1.0) {4}; 
   \node [right] at (2.5, -2.0) {5};

   \filldraw [draw=black, fill=red!20, line width=1.5pt] (FB)--(TV)--(E) -- (FB);

   \filldraw [draw=black, fill=blue!20, line width=1.5pt] (FB)--(S)--(F) -- (FB);
   
\end{tikzpicture}
        \subcaption{This simplicial complex is not a matroid.}
        \label{fig:figure-matroid-no}
	\end{subfigure}

\caption{Two examples of pure simplicial complexes.}
\end{figure}
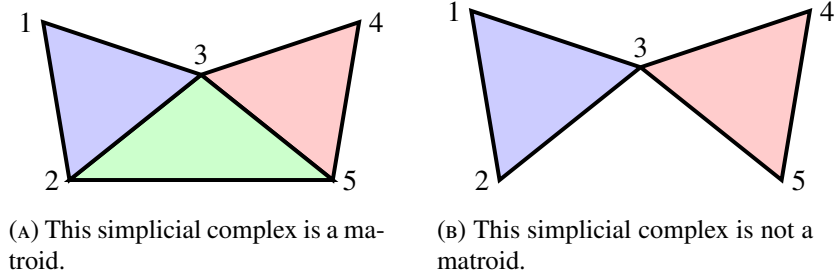

\subsection{Cooperative games on simplicial complexes}
In \cite{Martino-cooperative}, the author introduces the notion of cooperative game on the simplicial complex $\Delta$, inspired by the work of Bilbao, Driessen, Jim\'{e}nez Losada and Lebr\'{o}n \cite{Shapley-matroids-static}.

Here we shortly recall that a cooperative game on a simplicial complex $\Delta$ is the pair $(\Delta, v)$ where $v$ is a characteristic function $v:\operatorname{Set}(\Delta)\rightarrow \mathbb{R}$ under the constrain $v(\emptyset)=0$. 
The verticies of $\Delta$ are the players of the cooperative game and a coalition $T$ is \emph{feasible} if $T\in \Delta$.
The set $\charFun$
of characteristic functions on $\Delta$ is naturally a real vector space.

Given a cooperative game $(\Delta, v)$, one can re-scale the characteristic function with a scalar $c$ and obtain a new cooperative game $(\Delta, cv)$, where $(cv)(T)=c(v(T))$ for every subset $T\in \Delta$.

\begin{definition}
An \emph{individual value} for a player $i$ in $[n]$ is a function $\phi_i:\charFun\rightarrow \mathbb{R}$.
\end{definition}

The goal of each individual value $\phi_i(v)$ is assessing the worth of the participation of the player $i$ in to the game. 
Naturally, we are looking for values such that $\phi_i(cv)=c\phi_i(v)$, because the worth of each player is just re-scaled.
For this reason we often consider a cone $\mathfrak{I}$ of cooperative game in $\charFun$.

\subsection{Efficiency axioms previously introduced in literature}\label{subsec:intro-efficiency}

In the traditional case the sum of such values is often compare with the total number of payoff $v([n])$ of the grand coalition $[n]$. 
If the sum $\sum_i \phi_i(v)$ is greater than $v([n])$, then the assessment is going to distribute to all the player a larger amount than the one eventually obtained. This is the optimistic (w. r. to $v$) setting. Vice versa, the group value $\phi(v)=\{\phi_1(v), \dots, \phi_n(v)\}$ is pessimistic.

It may be artificial, but it is surely interesting to consider group values that are not optimistic or pessimistic.
This leads to the so called \emph{Efficiency Axiom}:

\vspace{0.2cm}
\hspace{0.2cm} \textbf{Efficiency Axiom.} 

\hspace{0.2cm} For every cooperative game $(2^{[n]}, v)$ in $\mathfrak{I}$, $\sum_{i=1}^n \phi_i(v)=v([n])$.
\vspace{0.2cm}

This axiom is part of the requirement in the characterization of the Shapley values \cite{Shapley-a-value, Shapley-core-convex, Weber-robabilistic-values-for-games}.

The effort of generalizing such requirement can be found already in the work \cite{Shapley-matroids-static}, where they deal with the notion of \emph{probabilistic} efficiency for a cooperative game over a matroid. Here we present the natural generalization to simplicial complexes: 

\vspace{0.2cm}

\hspace{0.2cm} \textbf{Probabilistic Efficiency Axiom.} 

\hspace{0.2cm} For every cooperative game $(\Delta, v)$ in $\mathfrak{I}$, $\sum_{i=1}^n \phi_i(v)=\sum_{F\in \FacetsD}c_F v(F)$, 

\noindent
\hspace{0.6cm} 
 with $\sum_{F\in \FacetsD}c_F =1$ and $c_F\geq 0$ for every facet $F$.
\vspace{0.2cm}

As the the previous case, also this axiom is used as a necessary and sufficient condition for writing
the quasi-probabilistic values as sum of Shapley values \cite{Shapley-matroids-static, Martino-cooperative}.

\subsection{Carrier games}
There are two set of games that have a crucial role in the theory of probabilistic values \cite{Weber-robabilistic-values-for-games}. 
We are going to called both set \emph{carrier games}, even if in literature this terminology often refer to the first one:
\[
	\mathcal{C} = \{v_T: \emptyset \neq T \subset [n] \}, \,\,\,\hat{\mathcal{C}} = \{\hat{v}_T: \emptyset \neq T \subset [n] \},
\]
where $v_T$ and $\hat{v}_T$ are so defined:
\[
	v_T (S)=\begin{cases}
				1 &  T\subseteq S\\
				0 &\text{otherwise.}
			\end{cases},\,\,\, \hat{v}_T (S)=\begin{cases}
				1 &  T\subsetneq S\\
				0 &\text{otherwise.}
			\end{cases}
\]
We generalize these notation for any element $T$ of a simplicial complex. Indeed for every partially order set $(P, \leq_P)$ and every element $q$ in $P$ we consider the following function:
\[
	u_q^P(s)\define \begin{cases}
				1 &  q\leq_P s \\
				0 &\text{otherwise.}
			\end{cases},\,\,\, 				\hat{u}_q^P(s)\define \begin{cases}
				1 &  q<_P s \\
				0 &\text{otherwise.}
			\end{cases}
\]
Thus, we define
\[
	v_T (S)\define u_T^\Delta(S),\,\,\, \hat{v}_T (S)\define \hat{u}_T^\Delta (S).
\]
It is easy to see that in the classical case (when $\Delta$ is a full simplex on $n$ verticies) these functions reproduce the carrier games.

\begin{definition}
	Let $\Delta$ be a simplicial complex. The sets of carrier games are so defined:
	\[
		\mathcal{C} = \{v_T: \emptyset \neq T \in \Delta \}, \,\,\,\hat{\mathcal{C}} = \{\hat{v}_T: \emptyset \neq T\in \Delta \}, 
	\]
	where $v_T (S)\define u_T^\Delta(S)$ and $\hat{v}_T (S)\define \hat{u}_T^\Delta (S)$; moreover, $\hat{v}_{\emptyset}\define \hat{u}_{\emptyset}^\Delta$.
\end{definition}


\section{A unique approach to efficiency}
Let us assume that the total number of payoff is prescribed as 
\begin{equation}\label{eq:total-number-payoff-generic}
	\vtot^{gen}=\sum_{T\in \Delta} a_T v(T).
\end{equation}
%
and we require the following efficiency constrain:

\vspace{0.2cm}
\hspace{0.2cm} \textbf{Generic Efficiency Axiom.} 

\hspace{0.2cm} For every cooperative game $(2^{[n]}, v)$ in $\mathfrak{I}$, $\sum_{i=1}^n \phi_i(v)=\vtot^{gen}$.
\vspace{0.2cm}

Then, we characterize the individual values that can be written in the classical sum of marginal contributions and that satisfy the \emph{Generic Efficiency Axiom}.

\begin{theorem}\label{thm:g.e.a.}
Let $\Delta$ be a simplicial complex and let $\mathfrak{I}$ be a cone of cooperative games $v$ defined on $\Delta$ containing the carrier games $\mathcal{C}$ and $\hat{\mathcal{C}}$.

\noindent
Let $\phi$ be a group value on $\mathcal{I}$ such that for each $i\in [n]$ and assume that for each $v\in\mathfrak{I}$, we can write:
\[
		\phi_i(v)=\sum_{T\in \Link{i}{\Delta}}p_T^i \left(v(T\cup i) - v(T)\right).
\]

The group value $\phi$ satisfies the simplicial efficiency axiom if and only if for all non-facet $T$ in $\Delta$
\begin{equation}\label{eq:first-condition-efficienc-generic}	
\sum_{i\in T}p^i_{T\setminus i}-\sum_{j, T\in \Link{j}{\Delta}}p_T^j=a_T,
\end{equation}
and for all facet $F$ of $\Delta$
\begin{equation}\label{eq:second-condition-efficiency-generic}
	\sum_{i\in F}p^i_{F\setminus i}=a_F.
\end{equation}
\end{theorem}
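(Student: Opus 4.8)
The plan is to exploit the linearity of both sides of the efficiency identity in the characteristic function $v$. Under the stated hypothesis both $\sum_{i=1}^n\phi_i(v)$ and $\vtot^{gen}$ are linear functionals on $\charFun$, so each is of the form $\sum_{S\in\Delta}c_S\,v(S)$ for suitable real coefficients; the theorem will follow once we compute the coefficients of $\sum_i\phi_i$ and compare them termwise with the $a_S$.

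First I would substitute the assumed formula for $\phi_i$ and interchange the order of summation,
\[
\sum_{i=1}^n\phi_i(v)=\sum_{i=1}^n\sum_{T\in\Link{i}{\Delta}}p_T^i\bigl(v(T\cup i)-v(T)\bigr),
\]
and then, for a fixed nonempty $S\in\Delta$, collect all contributions to the coefficient of $v(S)$. A positive term $p_T^i\,v(T\cup i)$ contributes to $v(S)$ exactly when $T\cup i=S$ with $i\notin T$, i.e.\ when $i\in S$ and $T=S\setminus i$ (and then $T\in\Link{i}{\Delta}$ automatically, since $S\in\Delta$), giving $+p^i_{S\setminus i}$ for each $i\in S$. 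A negative term $-p_T^i\,v(T)$ contributes when $T=S$, which requires $S\in\Link{i}{\Delta}$; relabelling the summation index as $j$, each $j$ with $S\in\Link{j}{\Delta}$ (equivalently $j\notin S$ and $S\cup j\in\Delta$) contributes $-p_S^j$. Hence the coefficient of $v(S)$ equals
\[
c_S=\sum_{i\in S}p^i_{S\setminus i}-\sum_{j,\,S\in\Link{j}{\Delta}}p_S^j .
\]
When $S=F$ is a facet there is no $j\notin F$ with $F\cup j\in\Delta$, so the second sum is empty and $c_F=\sum_{i\in F}p^i_{F\setminus i}$; this is precisely the dichotomy recorded in \eqref{eq:first-condition-efficienc-generic} and \eqref{eq:second-condition-efficiency-generic}. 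I expect this bookkeeping — in particular keeping the index set $\{j:S\in\Link{j}{\Delta}\}$ straight and verifying it is empty exactly at facets — to be the one place where care is genuinely needed.

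With the coefficients in hand the equivalence is short. For the ``if'' direction, the two displayed conditions say exactly that $c_S=a_S$ for every nonempty $S\in\Delta$, so $\sum_i\phi_i(v)=\sum_S c_S v(S)=\sum_S a_S v(S)=\vtot^{gen}$ for every $v\in\mathfrak{I}$ (the term $S=\emptyset$ being harmless, as $v(\emptyset)=0$). For the ``only if'' direction I must recover each $c_S$ individually from the identity $\sum_S c_S v(S)=\sum_S a_S v(S)$, which a priori holds only on the cone $\mathfrak{I}$; this is where the two carrier families are used. Evaluating the identity at $v_T\in\mathcal{C}$ gives $\sum_{S\supseteq T}c_S=\sum_{S\supseteq T}a_S$, while evaluating it at $\hat{v}_T\in\hat{\mathcal{C}}$ gives $\sum_{S\supsetneq T}c_S=\sum_{S\supsetneq T}a_S$; subtracting the second scalar equation from the first cancels every term except $S=T$ and yields $c_T=a_T$ for each nonempty $T\in\Delta$. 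Substituting the computed value of $c_T$ then produces the two asserted conditions, which completes the proof.
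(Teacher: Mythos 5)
Your proof is correct and follows essentially the same route as the paper's: reorder the double sum to read off the coefficient of each $v(S)$, observe that the second sum vanishes exactly at facets, and use the carrier games $v_T$ and $\hat v_T$ to extract the individual coefficients from an identity that a priori holds only on the cone $\mathfrak{I}$. If anything, your write-up is the cleaner of the two, since it makes explicit that the coefficient comparison is the sufficiency direction while the carrier games are what necessity genuinely requires.
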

\begin{proof}
	Let us show that the equations \eqref{eq:first-condition-efficienc-generic} and \eqref{eq:second-condition-efficiency-generic} are necessary. 	
Using our assumption, this is:
	\[
		\sum_{i=1}^n \phi_i(v)=\sum_{i\in [n]} \sum_{T\in \Link{i}{\Delta}}p_T^i \left(v(T\cup i) - v(T)\right).
	\]
	We reorder the terms in the sum as
	\[
		\sum_{i=1}^n \phi_i(v)=\sum_{T\in \Delta}v(T)\left(\sum_{i\in T}p^i_{T\setminus i}-\sum_{j, T\in \Link{j}{\Delta}}p_T^j\right).
	\]
	Since $\vtot^{gen}=\sum_{T\in \Delta}a_T v(T)$, when $T=F$ is a facet, we get \eqref{eq:second-condition-efficiency-generic}.
	If $T$ is not a facet, then it is clear that $\phi$ satisfies \eqref{eq:first-condition-efficienc-generic}.
	
	
	To prove the opposite direction one needs to note that if $T$ is not a facet, then
	\[
		\sum_{i=1}^n\phi(v_T)-\sum_{i=1}^n\phi(\hat{v}_T)=	\sum_{i\in T}p^i_{T\setminus i}-\sum_{j, T\in \Link{j}{\Delta}}p_T^j.
	\]
	By hypothesis the latter equals $a_T$.
	Similarly if $F$ is a facet then
	\[
		\sum_{i=1}^n\phi(v_F) = \sum_{i\in T}p^i_{T\setminus i}.
	\]
	and by hypothesis the latter is equal $a_F$.
\end{proof}

\subsection{The Traditional Efficiency}
As a corollary of the previous theorem we can easily obtain Theorem 11 of \cite{Weber-robabilistic-values-for-games}.
In fact, in the traditional cooperative game on a full simplicial complex with $n$ verticies, the efficiency axiom is the following:

\vspace{0.2cm}
\hspace{0.2cm} \textbf{Efficiency Axiom.} 

\hspace{0.2cm} For every cooperative game $(2^{[n]}, v)$ in $\mathfrak{I}$, one has $\sum_{i=1}^n \phi_i(v)=v([n])$.
\vspace{0.2cm}

\noindent
Thus, in equation \eqref{eq:total-number-payoff-generic}, $a_{T}=0$ for every subset of $[n]$, but $a_{[n]}=1$.
Hence, Theorem 11 of \cite{Weber-robabilistic-values-for-games} follows as a corollary of our result:

\begin{theorem}\label{thm:t.e.a.}
Let $\mathfrak{I}\subset \mathbb{R}^{2^n-1}$ be the cone of cooperative games containing the (classical) carrier games $\mathcal{C}$ and $\hat{\mathcal{C}}$.

Let $\phi$ be a group value on $\mathcal{I}$ such that for each $i\in [n]$ and assume that for each $v\in\mathfrak{I}$, we can write:
\[
		\phi_i(v)=\sum_{T\in [n]\setminus i}p_T^i \left(v(T\cup i) - v(T)\right).
\]

\noindent
The group value $\phi$ satisfies the simplicial efficiency axiom if and only if for all non-facet $T$ in $\Delta$
\begin{equation*}
\sum_{i\in T}p^i_{T\setminus i}-\sum_{j\notin T}p_T^j=0,
\end{equation*}
and for all facet $F$ of $\Delta$
\begin{equation*}
	\sum_{i\in [n]}p^i_{[n]\setminus i}=1.
\end{equation*}
\end{theorem}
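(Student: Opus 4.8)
The plan is to recognize Theorem~\ref{thm:t.e.a.} as the specialization of Theorem~\ref{thm:g.e.a.} to the full simplex $\Delta=2^{[n]}$, so that the entire argument reduces to translating the generic data into the traditional setting. First I would record the two structural facts about $\Delta=2^{[n]}$ that drive everything: its unique facet is the grand coalition $[n]$, and for each vertex $i$ the link is $\Link{i}{\Delta}=\{T\subseteq[n]:i\notin T\}$, which is exactly the index set appearing in the marginal-contribution formula $\phi_i(v)=\sum_{T\subseteq[n]\setminus i}p_T^i\bigl(v(T\cup i)-v(T)\bigr)$. I would also note that over the full simplex the carrier games $\mathcal{C}$ and $\hat{\mathcal{C}}$ of the Preliminaries reduce to the classical carrier games, as already observed in the subsection on carrier games, so the cone $\mathfrak{I}$ here meets the hypotheses of Theorem~\ref{thm:g.e.a.}.

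Next I would express the traditional Efficiency Axiom $\sum_{i=1}^n\phi_i(v)=v([n])$ as a single instance of the Generic Efficiency Axiom, by choosing the coefficients $a_{[n]}=1$ and $a_T=0$ for every $T\neq[n]$. With this choice $\vtot^{gen}=\sum_{T\in\Delta}a_T v(T)=v([n])$, so the two requirements agree verbatim, and Theorem~\ref{thm:g.e.a.} applies directly.

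Invoking that theorem, equation~\eqref{eq:second-condition-efficiency-generic} for the single facet $F=[n]$ reads $\sum_{i\in[n]}p^i_{[n]\setminus i}=a_{[n]}=1$, which is precisely the facet condition of the statement. For a non-facet $T$, that is $T\neq[n]$, equation~\eqref{eq:first-condition-efficienc-generic} reads $\sum_{i\in T}p^i_{T\setminus i}-\sum_{j,\,T\in\Link{j}{\Delta}}p_T^j=a_T=0$. The only simplification required concerns the second sum: the condition $T\in\Link{j}{\Delta}$ means $j\notin T$ together with $T\cup j\in\Delta$, but in the full simplex $T\cup j\in\Delta$ holds automatically, so $T\in\Link{j}{\Delta}$ is equivalent to $j\notin T$. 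Hence the index set collapses to $\{j:j\notin T\}$ and the condition becomes $\sum_{i\in T}p^i_{T\setminus i}-\sum_{j\notin T}p_T^j=0$, matching the non-facet condition of the statement.

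I do not expect a genuine obstacle here, since the result is a direct corollary obtained by substitution. The one point deserving a careful check is the equivalence of index sets $\{j:T\in\Link{j}{\Delta}\}=\{j:j\notin T\}$, as this is the only place where the ambient complex $2^{[n]}$ is actually used; everything else is a transcription of Theorem~\ref{thm:g.e.a.} under the chosen coefficients. This recovers Theorem~11 of \cite{Weber-robabilistic-values-for-games}.
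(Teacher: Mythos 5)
Your proposal is correct and follows exactly the paper's own route: the paper's proof likewise observes that $\Link{i}{2^{[n]}}=[n]\setminus i$ and that $[n]$ is the unique facet, then applies Theorem~\ref{thm:g.e.a.} with $\vtot=v([n])$. Your write-up merely makes explicit the choice of coefficients $a_{[n]}=1$, $a_T=0$ and the collapse of the index set $\{j: T\in\Link{j}{\Delta}\}$ to $\{j: j\notin T\}$, which the paper leaves implicit.
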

\begin{proof}
	Observe that $\Link{i}{2^{[n]}}=[n]\setminus i$ and there is only one facet, $F=[n]$. Then, apply Theorem \ref{thm:g.e.a.} with $\vtot=v([n])$.
\end{proof}

\subsection{The Probabilistic Efficiency}

We can also characterize the individual values that satisfy the probabilistic efficiency, introduced in \cite{Shapley-matroids-static} for cooperative games on matroids and generalized for games on every simplicial complex in in Section 6 of \cite{Martino-cooperative}:

\vspace{0.2cm}
\hspace{0.2cm} \textbf{Probabilistic Efficiency Axiom.} 

\hspace{0.2cm} For every cooperative game $(\Delta, v)$ in $\mathfrak{I}$, $\sum_{i=1}^n \phi_i(v)=\sum_{F\in \FacetsD}c_F v(F)$, 

\noindent
\hspace{0.6cm} 
 with $\sum_{F\in \FacetsD}c_F =1$ and $c_F\geq 0$ for every facet $F$.
\vspace{0.2cm}

\noindent
Thus, the total number of payoff can be written as in equation \eqref{eq:total-number-payoff-generic} by setting $a_{T}=0$ for every subset of $[n]$, but the facets where $a_{F}=c_F$.
In next results we characterize the individual values that fulfill this efficiency request.

\begin{theorem}\label{thm:probabilistic-characterization}
Let $\Delta$ be a simplicial complex and let $\mathfrak{I}$ be a cone of cooperative game defined on $\Delta$ containing the carrier games $\mathcal{C}$ and $\hat{\mathcal{C}}$.

\noindent
Let $\phi$ be a group value on $\mathfrak{I}$ such that for each $i\in [n]$ and each $v\in\mathfrak{I}$, we can write:
\[
		\phi_i(v)=\sum_{T\in \Link{i}{\Delta}}p_T^i \left(v(T\cup i) - v(T)\right).
\]

The group value $\phi$ satisfies the probabilistic efficiency axiom if and only if for all non-facet $T$ in $\Delta$
\begin{equation}\label{eq:first-condition-efficiency-probabilistic}
	\sum_{i\in T}p^i_{T\setminus i}-\sum_{j, T\in \Link{j}{\Delta}}p_T^j=0,
\end{equation}
and for all facet $F$ of $\Delta$
\begin{equation}\label{eq:second-condition-efficiency-probabilistic}
	\sum_{i\in F}p^i_{F\setminus i}=c_F
\end{equation}
\end{theorem}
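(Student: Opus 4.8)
The plan is to deduce this statement directly from Theorem \ref{thm:g.e.a.} by recognizing the probabilistic efficiency axiom as a special instance of the generic efficiency axiom. First I would observe that the probabilistic total payoff $\sum_{F \in \FacetsD} c_F v(F)$ is exactly of the form $\vtot^{gen} = \sum_{T \in \Delta} a_T v(T)$ once we set $a_F = c_F$ for every facet $F$ and $a_T = 0$ for every non-facet $T \in \Delta$. The side constraints $\sum_{F \in \FacetsD} c_F = 1$ and $c_F \geq 0$ merely single out which tuples $(a_F)$ are admissible; they do not affect the structure of the characterization, so they can be carried along as an external restriction on the prescribed payoff.

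With this identification in hand, the remaining work is purely a matter of substituting these coefficients into the two conditions of Theorem \ref{thm:g.e.a.}. For a non-facet $T$, equation \eqref{eq:first-condition-efficienc-generic} becomes $\sum_{i\in T}p^i_{T\setminus i}-\sum_{j, T\in \Link{j}{\Delta}}p_T^j = a_T = 0$, which is precisely \eqref{eq:first-condition-efficiency-probabilistic}. For a facet $F$, equation \eqref{eq:second-condition-efficiency-generic} reads $\sum_{i\in F}p^i_{F\setminus i} = a_F = c_F$, matching \eqref{eq:second-condition-efficiency-probabilistic}. Since the hypotheses assumed here — that $\mathfrak{I}$ is a cone containing the carrier games $\mathcal{C}$ and $\hat{\mathcal{C}}$, and that each $\phi_i$ admits the marginal-contribution expansion over $\Link{i}{\Delta}$ — coincide verbatim with those of Theorem \ref{thm:g.e.a.}, the biconditional transfers without modification in both directions.

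There is no genuine obstacle here: the only point requiring a moment of care is to confirm that setting $a_T = 0$ on every non-facet really does reproduce the probabilistic payoff, i.e. that the support of $\vtot^{gen}$ is confined to the facets of $\Delta$. This is immediate from the statement of the probabilistic efficiency axiom, whose right-hand side ranges only over $\FacetsD$. I would therefore keep the proof to a single short paragraph, invoking Theorem \ref{thm:g.e.a.} with the coefficient assignment $a_F = c_F$ on facets and $a_T = 0$ elsewhere, and reading off the two displayed equations.
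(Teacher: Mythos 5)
Your proposal is correct and is exactly the paper's argument: the paper's proof consists of the single line ``apply Theorem \ref{thm:g.e.a.} with $\vtot=\sum_{F\in \FacetsD}c_F v(F)$,'' which is precisely your coefficient assignment $a_F=c_F$ on facets and $a_T=0$ elsewhere. Your additional observation that the constraints $\sum_F c_F=1$, $c_F\geq 0$ play no role in the characterization also matches the remark the paper places immediately after the theorem.
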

\begin{proof}
	Simply apply Theorem \ref{thm:g.e.a.} with $\vtot=\sum_{F\in \FacetsD}c_F v(F)$.
\end{proof}

\begin{remark}
	We note that the conditions $\sum_{F\in \FacetsD}c_F =1$ and $c_F\geq 0$ are irrelevant in the Theorem \ref{thm:probabilistic-characterization}. Thus the statement holds also for the individual values that satisfy the condition $\sum_{i=1}^n \phi_i(v)=\sum_{F\in \FacetsD}c_F v(F)$ without any requirement on $c_F$. For instance, $c_F$ may be a negative number.
\end{remark}

\section{The Simplicial Efficiency}\label{sec:simplicial-efficiency}

In this section we want to introduce a new concept of efficiency that differs from the previous approaches.
Since the grand coalition $[n]$ is forbidden, the largest possible coalitions are precisely the elements of \FacetsD, but the facets may intersect. If we work under the constrain that $\Delta$ is a connected simplicial complex, if $\Delta\neq 2^{[n]}$, then every facet intersect at least another facet.
This reasoning leads to the inclusion exclusion problem for computing the total number of payoff $\vtot$ for the cooperative game $(\Delta, v)$.
Let us set up a proper arithmetic for doing this.

\vspace{0.2cm}

Let  $F_1, F_2, ..., F_k$ be a random order of the facets. 
We begin by considering $v(F_1)+v(F_2)$. If they intersect, then the worth of intersection is double counted and we subtract this: $v(F_1)+v(F_2)-v(F_1\cap F_2)$. 
Next, we add $v(F_3)$ and we might need to correct again our computation subtracting $v(F_3\cap (F_1\cup F_2))$. 
Now, $v(F_3\cap (F_1\cup F_2))$ may not be a simplex, but it is  the union of simplicies. Therefore, we use the following trick: let $K$ be a sub-complex of $\Delta$, then 
\begin{equation}\label{eq:trick-for-computing-vtot}
	v(K)\define\sum_{F\in \Facets{K}}v(F).
\end{equation}
With this in mind, we keep going in our computation by rewriting $v(F_3\cap (F_1\cup F_2))$ and adding $v(F_4)$ and so on. Thus, we define $\vtot^{simpl}$ as the following real number:
\begin{equation}\label{eq:total-number-payoff}
	\vtot^{simpl}\define v(F_1)+v(F_2)-v(F_1\cap F_2)+v(F_3)-v(F_3\cap (F_1\cup F_2))+\dots
\end{equation}

Let us now prove that such number is well define, by showing that is is independent by the ordering of the facets.

\begin{remark}
The arithmetic trick we are using is not so far from the idea proposed in \cite{Shapley-matroids-static}. Indeed their efficiency axiom can be seen as the probabilistic (weighted) version of  $\sum_{i=1}^n \phi_i(v)=v(\Delta) \overset{\operatorname{\scriptscriptstyle \mbox{ by } \eqref{eq:trick-for-computing-vtot}}}= \sum_{F\in \FacetsD} v(F)$. 
\end{remark}

We recall that if $A$ is a finite set, ${A \choose l}$ is the set of subsets of $A$ of cardinality $l$.

\begin{theorem}\label{thm:vtot-grothendieck}
	Let $k$ be the number of facets of a simplicial complex $\Delta$, $k\define \#\FacetsD$.
	The total number of payoff $\vtot$ for the cooperative game $(\Delta, v)$ defined in equation \eqref{eq:total-number-payoff} is precisely
	\begin{equation}\label{eq:vtot-grothendieck}
		\vtot=\sum_{l=1}^{k}\sum_{\{F_{i_j}\}\in{\FacetsD \choose l}} (-1)^{l+1}v(F_{i_1}\cap \dots \cap F_{i_l})
	\end{equation}
	where $F_{i_j}$ is the $i_j$-th elements of the set $\{F_{i_1}, \dots, F_{i_l}\}$.

	Moreover, if $\Delta$ is the full simplex $2^{[n]}$, then $\vtot=v([n])$.
\end{theorem}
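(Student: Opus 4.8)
The plan is to treat the alternating sum on the right of \eqref{eq:vtot-grothendieck} as the primary object: since it refers to no ordering of the facets, proving that the inductive recipe \eqref{eq:total-number-payoff} reproduces it simultaneously establishes the formula and the independence of the chosen order $F_1,\dots,F_k$. I would in fact prove the slightly more general statement that, for \emph{any} faces $G_1,\dots,G_r\in\Delta$, running the recipe \eqref{eq:total-number-payoff} on the subcomplex they generate returns
\[
 \sum_{\emptyset\neq S\subseteq\{1,\dots,r\}}(-1)^{|S|+1}\,v\Bigl(\bigcap_{i\in S}G_i\Bigr),
\]
each intersection being again a face of $\Delta$, so that $v$ is defined on it. The theorem is then the case $r=k$ with $G_i=F_i$. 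I would induct on $r$: writing $L_{j}$ for the subcomplex generated by $G_1,\dots,G_j$ and $P_j$ for the partial value after $j$ steps, the recipe reads $P_j=P_{j-1}+v(G_j)-v(G_j\cap L_{j-1})$ with $P_1=v(G_1)$.

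The heart of the inductive step is the correction term $v(G_j\cap L_{j-1})$. Since $G_j\cap L_{j-1}$ is exactly the subcomplex generated by the $j-1$ faces $G_j\cap G_1,\dots,G_j\cap G_{j-1}$, the inductive hypothesis evaluates it as
\[
 v(G_j\cap L_{j-1})=\sum_{\emptyset\neq S\subseteq\{1,\dots,j-1\}}(-1)^{|S|+1}\,v\Bigl(G_j\cap\bigcap_{i\in S}G_i\Bigr),
\]
using $\bigcap_{i\in S}(G_j\cap G_i)=G_j\cap\bigcap_{i\in S}G_i$. Substituting into the recursion and reindexing $S\mapsto S\cup\{j\}$, the terms produced are precisely the subsets of $\{1,\dots,j\}$ containing $j$, with signs $(-1)^{|S\cup\{j\}|+1}=(-1)^{|S|}$; these are exactly the summands by which the target alternating sum grows when a $j$th generator is added, which closes the induction. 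The two remaining assertions are then immediate: well-definedness because \eqref{eq:vtot-grothendieck} is visibly symmetric in the facets, and the reduction because $\Delta=2^{[n]}$ has the single facet $[n]$, so $k=1$ and the formula collapses to $v([n])$.

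The step I expect to require the most care is this very evaluation of $v(G_j\cap L_{j-1})$. The intersection is in general a union of several simplices whose maximal faces $G_j\cap G_i$ can overlap, so the bare facet-sum \eqref{eq:trick-for-computing-vtot} records only the first ($|S|=1$) layer and would overcount on those overlaps; what the induction needs is that $v$ on a subcomplex is the full order-independent alternating sum over a generating family of its faces, the higher layers supplying exactly the inclusion--exclusion corrections. Consequently the one genuinely delicate point is to verify that this alternating sum depends only on the subcomplex and not on the generating family chosen for it — equivalently, that it agrees with the alternating sum taken over the facets of the subcomplex — which is an abstract inclusion--exclusion (nerve/Euler-characteristic) fact that I would establish before invoking the inductive hypothesis on $G_j\cap L_{j-1}$.
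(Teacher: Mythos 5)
Your argument is correct, and it takes a genuinely different route from the paper's. The paper's proof never engages with the sequential recipe \eqref{eq:total-number-payoff} at all: it starts from $\sum_{F\in\FacetsD}v(F)$, argues informally that pairwise intersections are ``double counted,'' subtracts them, observes the resulting over-subtraction, adds back triple intersections, and iterates. That is a heuristic re-derivation of the symmetric expression \eqref{eq:vtot-grothendieck} (the counting metaphor is not literal, since $v$ is an arbitrary function rather than a measure), and the advertised order-independence is obtained only implicitly from the visible symmetry of the final formula. Your induction on the number of generators --- expanding the correction term $v(G_j\cap L_{j-1})$ via the inductive hypothesis applied to the generating family $\{G_j\cap G_i\}_{i<j}$ and reindexing $S\mapsto S\cup\{j\}$ --- is an actual proof that the recipe returns \eqref{eq:vtot-grothendieck} for every ordering, which is what the theorem asserts.

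The delicate point you isolate is real, and it is the one place where the paper's setup needs amending rather than merely tightening. Under the literal definition \eqref{eq:trick-for-computing-vtot}, $v(K)=\sum_{F\in\Facets{K}}v(F)$, the recipe \eqref{eq:total-number-payoff} does \emph{not} reproduce \eqref{eq:vtot-grothendieck}: for the complex generated by $F_1=\{1,2,3\}$, $F_2=\{1,2,4\}$, $F_3=\{2,3,4\}$, the correction complex $\bar{F_3}\cap(\bar{F_1}\cup \bar{F_2})$ has facets $\{2,3\}$ and $\{2,4\}$, so the bare facet sum contributes $v(\{2,3\})+v(\{2,4\})$ and the sequential total omits the term $+v(\{2\})$ present in the symmetric formula. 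Your requirement that $v$ of a subcomplex be the full alternating sum over a generating family --- together with the deferred lemma that this sum depends only on the subcomplex generated, which follows by cancelling a redundant generator $G\subseteq G_1$ via the sign-reversing pairing $S\leftrightarrow S\cup\{1\}$ for $1\notin S$ --- is exactly what makes the induction close and the theorem true as stated. So your proposal is sound and in fact repairs a gap that the paper's own proof glosses over.
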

\begin{proof}
	Let us consider the sum of the characteristic function evaluated over all facets $\sum_{F\in \FacetsD} v(F)$. Of course, we are double counting (at least!) everything that is in the intersection of two facets, and to correct our computation we take away this quantity, leading to:
	\[
		\sum_{F\in \FacetsD} v(F) - \sum_{\{F_{i_1}, F_{i_2}\} \subseteq \FacetsD} v(F_{i_1}\cap F_{i_2}).
	\]
	If a certain set $T$ appears in more than in one intersection $F_{i_1}\cap F_{i_2}$, we are subtracting $v(T)$ too many times. 
	So we should again correct our partial step by adding triple intersections:
	\[
		\sum_{F\in \FacetsD} v(F) - \sum_{\{F_{i_1}, F_{i_2}\} \subseteq \FacetsD} v(F_{i_1}\cap F_{i_2}) + \sum_{\{F_{i_1}, F_{i_2}, F_{i_3} \} \subseteq \FacetsD} v(F_{i_1}\cap F_{i_2}\cap F_{i_3}). 
	\]
	Because we only consider finite simplicial complex (see Section \ref{sec:preliminaries}), by iteration we get the formula in the statement.
	
	Finally, in the case $\Delta$ is the full simplex, there is only one facet, $[n]$, and so there are no double nor triple intersections. Naturally, the entire argument reduces to $\vtot=\sum_{F\in \{[n]\}} v(F)=v([n])$.	
\end{proof}

Another point of view for the previous proof is provide by considering the following sub-complexes of $\Delta$:
\begin{eqnarray*}
	\Delta^{(0)}&=&\Delta\\
	\Delta^{(1)}& \overset{\operatorname{\scriptscriptstyle not}}=& \Delta' \define \bigcup_{\{F_{i_1}, F_{i_2}\} \subseteq \FacetsD} F_{i_1}\cap F_{i_2}	
\end{eqnarray*}
and generically
\begin{equation}\label{eq:def-delta-j}
	\Delta^{(j)}= \bigcup_{\{F_{i_1}, F_{i_2},\dots, F_{i_j}\} \subseteq \FacetsD} F_{i_1}\cap F_{i_2}\cap \dots \cap F_{i_j}
\end{equation}
With this notation and using the arithmetic trick in equation \eqref{eq:trick-for-computing-vtot}, $\vtot$ is the sum with signs of the worth each of these $\Delta^{(j)}$:
\begin{equation}\label{eq:total-number-payoff-as-sum}
	\vtot=v(\Delta^{(0)})-v(\Delta^{(1)})+v(\Delta^{(2)})+\dots +(-1)^j v(\Delta^{(j)}) +\dots v(\Delta^{(\# \FacetsD)})
\end{equation}
where some of the $\Delta^{(j)}$ can be empty.

\begin{remark}\label{rmk:connection-2-matroids-probabilistic}
	If we want to take in consideration a probabilistic approach, we could provide a probability distribution for the facets of $\Delta^{(0)}$ as done in Section 4 of \cite{Shapley-matroids-static} and generalized in \cite{Martino-cooperative}. Then one could do the same for $\Delta^{(1)}$ and generically $\Delta^{(j)}$ and obtain a probabilistic version of \eqref{eq:total-number-payoff-as-sum} and so a probabilistic version of the simplicial efficiency axiom. 
	A coherent choice for such probabilities should be requested.
\end{remark}

\begin{remark}\label{rmk:connection-2-matroids-approximation}
	Another way of looking the probabilistic efficiency proposed in \cite{Shapley-matroids-static} in view of our results is the following.
%
Equation \eqref{eq:total-number-payoff-as-sum} shows how $v(\Delta)$ (that essentially is the condition requested in \cite{Shapley-matroids-static} seems a first approximation of the \emph{Combinatorics} of the problem.

\noindent
Nevertheless, in the matroidal case, such approximation is not so far from the Simplicial efficiency axiom. We are going to treat this in the Section \ref{subsection:matroid-refinement}.
\end{remark}

The following functions encode the coefficients of $v(T)$ in formula \eqref{eq:vtot-grothendieck}.
For every $T\in \Delta$, we denote by $d_T$ the following number:
\[
	d_T \define \left(\sum_{k, T=F_1\cap \dots \cap F_k} (-1)^{k+1}\right)
\]
where the sum runs over all positive number $k$ such that $T$ can be written as a $k$-intersection of facets of $\Delta$. It is useful to consider $d_T=0$, if $T$ cannot be written as intersection of facets.


\begin{theorem}\label{thm:s.e.a.}
Let $\Delta$ be a simplicial complex and let $\mathfrak{I}$ be a cone of cooperative games defined on $\Delta$ containing the carrier games $\mathcal{C}$ and $\hat{\mathcal{C}}$.

Let $\phi$ be a group value on $\mathcal{I}$ such that for each $i\in [n]$ and each $v\in\mathcal{I}$, we can write:
\[
		\phi_i(v)=\sum_{T\in \Link{i}{\Delta}}p_T^i \left(v(T\cup i) - v(T)\right).
\]

\noindent
The group value $\phi$ satisfies the simplicial efficiency axiom if and only if for all non-facet $T$ in $\Delta$
\begin{equation}\label{eq:first-condition-efficiency-simplicial}
	\sum_{i\in T}p^i_{T\setminus i}-\sum_{j, T\in \Link{j}{\Delta}}p_T^j=d_T,
\end{equation}
and for all facet $F$ of $\Delta$
\begin{equation}\label{eq:second-condition-efficiency-simplicial}
	\sum_{i\in F}p^i_{F\setminus i}=1.
\end{equation}
\end{theorem}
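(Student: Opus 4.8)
The plan is to recognize the simplicial efficiency axiom as a special case of the Generic Efficiency Axiom and then invoke Theorem \ref{thm:g.e.a.} directly. The essential observation is that the total payoff $\vtot^{simpl}$ computed in \eqref{eq:vtot-grothendieck} is linear in $v$, hence it already fits the template \eqref{eq:total-number-payoff-generic} of a generic total payoff $\sum_{T\in\Delta}a_T v(T)$ once we identify the correct coefficients.

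First I would rewrite $\vtot^{simpl}$ by collecting, for each face $T\in\Delta$, all terms $v(F_{i_1}\cap\dots\cap F_{i_l})$ whose intersection equals $T$. Since an intersection of faces is again a face, every summand in \eqref{eq:vtot-grothendieck} contributes to exactly one such $T$, and grouping gives
\[
\vtot^{simpl}=\sum_{T\in\Delta}\left(\sum_{k,\,T=F_1\cap\dots\cap F_k}(-1)^{k+1}\right)v(T)=\sum_{T\in\Delta}d_T\,v(T),
\]
which is exactly the generic form with $a_T=d_T$.

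Next I would pin down $d_F$ when $F$ is a facet. Because a facet is maximal by inclusion, any representation $F=F_{i_1}\cap\dots\cap F_{i_l}$ by distinct facets forces $F\subseteq F_{i_j}$ and hence $F=F_{i_j}$ for every $j$; distinctness then excludes $l\geq 2$. Thus the only way to express a facet as an intersection of facets is the trivial one $F=F$ with $l=1$, giving $d_F=(-1)^{1+1}=1$. Feeding $a_T=d_T$ into Theorem \ref{thm:g.e.a.} produces condition \eqref{eq:first-condition-efficiency-simplicial} for every non-facet $T$ and, after substituting $d_F=1$, condition \eqref{eq:second-condition-efficiency-simplicial} for every facet $F$.

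I do not expect a genuine obstacle, since the real work---establishing necessity and sufficiency of the efficiency equations by testing against the carrier games $\mathcal{C}$ and $\hat{\mathcal{C}}$---has already been done inside Theorem \ref{thm:g.e.a.}. The only place needing a short argument is the facet computation $d_F=1$, which rests entirely on the maximality of facets.
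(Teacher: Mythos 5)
Your proposal is correct and follows exactly the paper's route: the paper's entire proof is ``Apply Theorem \ref{thm:g.e.a.} with $a_T=d_T$'', and you supply the two details it leaves implicit (the regrouping of \eqref{eq:vtot-grothendieck} into $\sum_T d_T v(T)$, and the maximality argument giving $d_F=1$ for facets), both of which are argued correctly.
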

\begin{proof}
	Apply Theorem \ref{thm:g.e.a.} with $a_T=d_T$
%
%
%
%
%
\end{proof}

\subsection{The matroid case}\label{subsection:matroid-refinement}
In the case the simplicial complex is a matroid $\Delta=M$ as treated in \cite{Shapley-matroids-dynamic, Shapley-matroids-static, MR3886659,MR2847360, MR2825616}, then all the facets (bases) $\{B_{i_1},\dots, B_{i_k}\}$ have the same cardinality, say $r$. 

Therefore by using \eqref{eq:trick-for-computing-vtot}, we prove that the total number of payoff is completely determined as in equation \eqref{eq:total-number-payoff-as-sum} by the $\Delta^{(0)}$ and $\Delta^{(1)}$, that is, by the payoff of the facets and of the intersections of cardinality $r-1$. (We have simplified out notation by denoting $\Delta^{(1)}$ as $\Delta'$.) 

\vspace{0.5cm}

First let us rewrite \eqref{eq:total-number-payoff} and set up some notations.
Consider a random order of the bases $B_1, B_2, ..., B_k$.
The notation $B_0\define\emptyset$ is useful.
Let us denote
\[
	\tilde{B_j}\define B_0\cup B_1\cup\dots \cup B_j.
\]
This is the sequential partial union of the facets under the given order.
For instance, $\tilde{B_0}=\emptyset$, $\tilde{B_1}=B_1$, $\tilde{B_2}=B_1\cup B_2$ and $\tilde{B_k}=\Delta$.

Matroids are simplicial complex with a special property: indeed they are shellable \cite{MR570784, MR1333388, MR1401765, MR3558045, MR3638330, MR3649231} and, therefore, there exists a shelling order of the facets (bases) $B_1, B_2, ..., B_k$ such that $\tilde{B}_{j-1}\cap B_j$ has codimension $1$, that is $\tilde{B}_{j-1}\cap B_j$ has dimension $r-2$ (the intersection is made by cardinality $r-1$ faces). 

\begin{remark}
The dimension of a simplicial complex differ by one with respect its rank. For instance, every non-empty graphs have rank 2, because every edge is identified by two verticies, and graphs are one dimensional.
\end{remark}

%

\begin{theorem}\label{thm-reduction-to-matroids}
	Let $(M, v)$ be a cooperative game on a matroid $M$ of rank $r$. Then, there exists an ordering of the facets (the shelling order of the bases) such that the total number of payoff is provided as
	\[
		\vtot=v(\Delta)-v(\Delta')
	\]	
	where 
	$$\Delta'= \bigcup_{\{F_{i_1}, F_{i_2}\} \subseteq \FacetsD} F_{i_1}\cap F_{i_2}.$$

	In other words,
	\[
		\vtotof{M} = \sum_{B\in \FacetsD}^q v(B) -\sum_{L} v(L),
	\]
	where the second sum runs over the subcomplex $L$ of $\Delta$ of dimension $r-2$ that are written as the intersection $\tilde{F}_{j-1}\cap F_j$ for $j=1,\dots, k$.
\end{theorem}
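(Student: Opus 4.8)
The plan is to feed a shelling order of the bases into the recursive definition \eqref{eq:total-number-payoff} of $\vtot$ and let the correction terms telescope, so that only top-dimensional and codimension-one faces survive. Since a matroid is shellable, I would fix a shelling $B_1,\dots,B_k$ of its bases; by the defining property of a shelling, each partial intersection $\tilde{B}_{j-1}\cap B_j$ is pure of dimension $r-2$, its facets being the cardinality-$(r-1)$ faces of $B_j$ already contained in $\tilde{B}_{j-1}$. Unrolling \eqref{eq:total-number-payoff} one facet at a time then yields
\[
\vtot=\sum_{j=1}^{k}v(B_j)-\sum_{j=2}^{k}v\!\left(\tilde{B}_{j-1}\cap B_j\right),
\]
each correction term being expanded through the trick \eqref{eq:trick-for-computing-vtot}.

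The first sum is $v(\Delta)=\sum_{B\in\FacetsD}v(B)$, because the facets of $\Delta$ are exactly the bases. For the second sum, shellability forces every $\tilde{B}_{j-1}\cap B_j$ to be pure of dimension $r-2$, so $v(\tilde{B}_{j-1}\cap B_j)$ is a sum of $v(L)$ over faces $L$ of cardinality $r-1$ only. This already delivers the structural content of the theorem: $\vtot$ is determined by the values of $v$ on $\Delta^{(0)}=\Delta$ and on the codimension-one faces $\Delta^{(1)}=\Delta'$ of \eqref{eq:def-delta-j}, never involving intersections of dimension below $r-2$, so \eqref{eq:total-number-payoff-as-sum} collapses to its first two terms. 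To match the correction with $v(\Delta')$ I would next check that, in a matroid, $\Delta'$ is itself pure of dimension $r-2$: given any pairwise intersection $T=B\cap B'$ with $|T|<r-1$, contracting $T$ and restricting to $B\cup B'$ gives a matroid of rank $r-|T|\geq 2$ whose base-exchange graph is connected, and lifting a single-exchange path from $B\setminus T$ to $B'\setminus T$ produces a cardinality-$(r-1)$ intersection of two bases containing $T$. Hence $T$ is never a facet of $\Delta'$, so the facets of $\Delta'$ are precisely the ridges lying in at least two bases.

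I expect this last identification to be the main obstacle, and the delicate point is the bookkeeping of multiplicities. Sweeping through the shelling, a ridge $L$ is a facet of $\tilde{B}_{j-1}\cap B_j$ exactly at each base containing $L$ after the first, so it is produced $(m_L-1)$ times, where $m_L$ denotes the number of bases containing $L$. Thus the telescoped correction equals $\sum_{L}(m_L-1)\,v(L)$, which is consistent with the coefficients $d_L$ of Theorem \ref{thm:s.e.a.} (indeed one computes $d_L=1-m_L$), but it coincides with the single evaluation $v(\Delta')$ only when every ridge of $\Delta'$ meets exactly two bases. Pinning down this hypothesis, or re-grouping the telescoped terms so that the compact form $\vtot=v(\Delta)-v(\Delta')$ is recovered, is the step on which I would concentrate the effort, since it is precisely where the clean statement is most fragile.
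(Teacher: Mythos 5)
Your route is the same one the paper takes --- the paper's own proof is a one-sentence appeal to shellability, Theorem \ref{thm:vtot-grothendieck} and equation \eqref{eq:total-number-payoff-as-sum} --- and you have carried out the part of it that actually works: unrolling \eqref{eq:total-number-payoff} along a shelling $B_1,\dots,B_k$ gives $\vtot=\sum_{j}v(B_j)-\sum_{j\ge 2}v\bigl(\tilde B_{j-1}\cap B_j\bigr)$, and the shelling property guarantees each correction term is pure of dimension $r-2$, so no intersection of dimension below $r-2$ ever enters. That establishes the ``in other words'' display of the statement (read as a sum over $j$, i.e.\ with multiplicity), and your proof of it is sound.

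The multiplicity worry you raise at the end is not a defect of your argument but a genuine problem with the first display of the theorem as literally written. With the convention \eqref{eq:trick-for-computing-vtot}, $v(\Delta')$ counts each facet of $\Delta'$ exactly once, whereas the telescoped correction counts a ridge $L$ with multiplicity $m_L-1$, where $m_L$ is the number of bases containing $L$; your computation $d_L=1-m_L$ is correct, since any two distinct bases containing a cardinality-$(r-1)$ face intersect exactly in it. The two quantities agree only when every ridge of $\Delta'$ lies in exactly two bases. The uniform matroid $U_{2,4}$ is a concrete counterexample: each singleton lies in $m=3$ bases, and both the shelling computation and the inclusion--exclusion formula \eqref{eq:vtot-grothendieck} give $\vtot=\sum_{B}v(B)-2\sum_{i}v(\{i\})$, whereas $v(\Delta)-v(\Delta')=\sum_{B}v(B)-\sum_{i}v(\{i\})$. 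So do not spend further effort trying to re-group the telescoped terms into $v(\Delta)-v(\Delta')$: that identity is false in general, the two displays in the theorem are not equivalent to one another, and the defensible content (which your proof delivers, and which the paper's one-line proof glosses over) is that for a matroid $\vtot$ depends only on the values of $v$ on the bases and on the codimension-one faces, via $\vtot=\sum_{B\in\FacetsD}v(B)-\sum_{L}(m_L-1)\,v(L)$.
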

\begin{proof}
	This is proved using shellability together with  Theorem \ref{thm:vtot-grothendieck} and equation \eqref{eq:total-number-payoff-as-sum}.
\end{proof}

 	\vspace{0.5cm}

\bibliographystyle{amsalpha}
\bibliography{unica}

 	\vspace{0.5cm}
	
 	\noindent
 	{\scshape Ivan Martino}\\
 	{\scshape Department of Mathematics, Royal Institute of Technology.}\\ 
 	{\itshape E-mail address}: \texttt{imartino@kth.se}
\end{document}